\newtheorem{lemma}{Lemma}
\newtheorem{theorem}{Theorem}
\newtheorem{corollary}{Corollary}
\newtheorem{claim}{Claim}
\newcommand{\BF}[1]{{\bf\boldmath #1\unboldmath}}
\newcommand{\B}{\{0,1\}}
\newcommand{\feedback}{\tau}
\title{On fixable families of Boolean networks}
\author{
Maximilien Gadouleau \footnote{Department of Computer Science, Durham University, UK} \and
Adrien Richard \footnote{Laboratoire I3S, UMR CNRS 7271 \& Universit\'e C\^ote d'Azur, France}
}
\begin{document}

\maketitle

\begin{abstract}
The asynchronous dynamics associated with a Boolean network $f:\B^n\to\B^n$ is a finite deterministic automaton considered in many applications. The set of states is $\B^n$, the alphabet is $[n]$, and the action of letter $i$ on a state $x$ consists in either switching the $i$th component if $f_i(x)\neq x_i$ or doing nothing otherwise. This action is extended to words in the natural way. We then say that a word $w$ {\em fixes} $f$ if, for all states $x$,  the result of the action of $w$ on $x$ is a fixed point of $f$. A whole family of networks is fixable if its members are all fixed by the same word, and the fixing length of the family is the minimum length of such a word. In this paper, we are interested in families of Boolean networks with relatively small fixing lengths. Firstly, we prove that fixing length of the family of networks with acyclic asynchronous graphs is $\Theta(n 2^n)$. Secondly, it is known that the fixing length of the whole family of monotone networks is $O(n^3)$. We then exhibit two families of monotone networks with fixing length $\Theta(n)$ and $\Theta(n^2)$ respectively, namely monotone networks with tree interaction graphs and conjunctive networks with symmetric interaction graphs. 
\end{abstract}

\section{Introduction}


A \BF{Boolean network} (\BF{network} for short) is a finite dynamical system usually defined by a function  
\[
f:\B^n\to\B^n,\qquad x=(x_1,\dots,x_n)\mapsto f(x)=(f_1(x),\dots,f_n(x)).
\]
Boolean networks have many applications. In particular, since the seminal papers of McCulloch and Pitts \cite{MP43}, Hopfield \cite{H82}, Kauffman \cite{K69,K93} and Thomas \cite{T73,TA90}, they are omnipresent in the modeling of neural and gene networks (see \cite{B08,N15} for reviews). They are also essential tools in computer science, for network coding solvability \cite{GRF16} and memoryless computation \cite{BGT14,CFG14a}. 

The ``network'' terminology comes from the fact that the \BF{interaction graph} of $f$ is often considered as the main parameter of $f$: it is the directed graph $G(f)$ with vertex set $[n]:=\{1,\dots,n\}$ and an arc from $j$ to $i$ if $f_i$ depends on $x_j$, that is, if there exist $x,y\in\B^n$ that only differ in the component $j$ such that $f_i(x)\neq f_i(y)$.  

In many applications, for modelling gene networks in particular, the dynamics derived from $f$ is the asynchronous dynamics \cite{A-J16}. That is usually represented by the directed graph $\Gamma(f)$, called \BF{asynchronous graph} of $f$ and defined as follows. The vertex set of $\Gamma(f)$ is $\B^n$, the set of all the possible \BF{states}, and there is an arc from $x$ to $y$ if and only if $x$ and $y$ differs in exactly one component, say $i$, and $f_i(x)\neq x_i$. An example of a network with its interaction graph and its asynchronous is given in Figure~\ref{fig:f}. 

\begin{figure}
\[
\begin{array}{ccc}
\begin{array}{c}
\begin{array}{c|c}
	x & f(x)\\\hline
	000 & 000\\
	001 & 000\\
	010 & 001\\
	011 & 001\\
	100 & 010\\
	101 & 000\\
	110 & 010\\
	111 & 100\\
\end{array}
\quad
\begin{array}{l}
	f_1(x)= x_1 \land x_2 \land x_3\\
	f_2(x)= x_1 \land \neg x_3\\
	f_3(x)= x_2 \land \neg x_1.
\end{array}
\end{array}
&
\begin{array}{c}
\begin{tikzpicture}
\def\d{2}
\node (000) at (0,0){000};
\node (001) at (1,1){001};
\node (010) at (0,2){010};
\node (011) at (1,3){011};
\node (100) at (2,0){100};
\node (101) at (3,1){101};
\node (110) at (2,2){110};
\node (111) at (3,3){111};
\path[thick,->,draw,black]
(001) edge (000)
(010) edge (000)
(010) edge (011)
(011) edge (001)
(100) edge (000)
(100) edge (110)
(101) edge (001)
(101) edge (100)
(110) edge (010)
(111) edge (110)
(111) edge (101)
;
\end{tikzpicture}
\end{array}
&
\begin{array}{c}
\begin{tikzpicture}
\node[outer sep=1,inner sep=2,circle,draw,thick] (1) at (150:1){$1$};
\node[outer sep=1,inner sep=2,circle,draw,thick] (2) at (30:1){$2$};
\node[outer sep=1,inner sep=2,circle,draw,thick] (3) at (270:1){$3$};
\draw[->,thick] (1.128) .. controls (140:1.8) and (160:1.8) .. (1.172);
\path[->,thick]
(1) edge[bend left=15] (2)
(2) edge[bend left=15] (1)
(2) edge[bend left=15] (3)
(3) edge[bend left=15] (2)
(3) edge[bend left=15] (1)
(1) edge[bend left=15] (3)
;
\end{tikzpicture}
\end{array}
\\[25mm]
\text{$3$-component network $f$}&
\Gamma(f)&
G(f)
\end{array}
\]
\caption{\label{fig:f}A network $f$ given under two different forms (its table and a definition of its components by logical formulas) with its asynchronous graph $\Gamma(f)$ and its interaction graph $G(f)$.}
\end{figure}
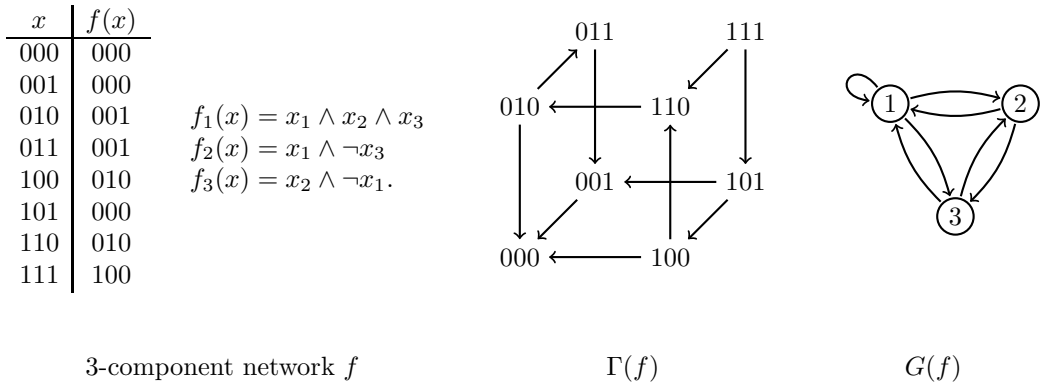

For any $i \in [n]$, the result of the action of the letter $i$ on $x$ is given by 
\[
f^i(x):=(x_1,\dots,f_i(x),\dots,x_n).
\] 
This action is extended to words on the alphabet $[n]$ in the natural way: the result of the action of a word $w=i_1i_2\dots i_k$ on a state $x$ is defined by 
\[
f^w(x):=(f^{i_k}\circ f^{i_{k-1}}\circ\dots\circ f^{i_1})(x).
\]

A word $w$ \BF{fixes} $f$ if $f^w(x)$ is a fixed point of $f$ for every $x$ \cite{AGRS18}. If $f$ admits a fixing word we say that $f$ is \BF{fixable}. For instance, the network in Figure~\ref{fig:f} is fixed by $w = 1231$, and is hence fixable. It is rather easy to see that $f$ is fixable if and only if there is a path in $\Gamma(f)$ from any initial state to a fixed point of $f$. We may think that the fixability is a strong property. However, somewhat surprisingly, for $n$ sufficiently large, more than half of the $n$-component networks are fixable:

\begin{theorem}[Bollob\'as, Gotsman and Shamir \cite{BGS93}]
Let $\phi(n)$ be the fraction of $n$-component networks that are fixable. Then 
\[
\lim_{n\to\infty} \phi(n)=1 - \frac{1}{e}.
\]
\end{theorem}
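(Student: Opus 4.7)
The plan is to show that, asymptotically, a random network is fixable if and only if it admits at least one fixed point, and then to compute the latter probability directly. Since fixability obviously forces $\Fix(f)\neq\emptyset$, one always has $\phi(n)\leq\Pr(\Fix(f)\neq\emptyset)$, so the bulk of the work is the reverse bound in the limit.

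For the probability of having at least one fixed point, view a uniformly random $f$ as a family of $n\,2^n$ independent fair bits $\{f_i(x)\}_{(i,x)\in[n]\times\B^n}$. The event $f(x)=x$ depends only on the $n$ bits $(f_i(x))_{i\in[n]}$, so $\Pr(f(x)=x)=2^{-n}$, and for distinct $x,y$ these events involve disjoint collections of bits and are hence mutually independent. Consequently
\[
\Pr(\Fix(f)=\emptyset)=(1-2^{-n})^{2^n}\;\longrightarrow\;e^{-1},
\]
which gives $\Pr(\Fix(f)\neq\emptyset)\to 1-e^{-1}$.

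For the matching asymptotic lower bound on $\phi(n)$, call a state $x$ \emph{trapped} if no fixed point of $f$ is reachable from $x$ in $\Gamma(f)$; then $f$ is fixable iff $\Fix(f)\neq\emptyset$ and no state is trapped. By symmetry it is enough to show $\Pr(\ZERO\text{ is trapped})=o(2^{-n})$, so that a union bound yields $\Pr(\exists\text{ trapped }x)=o(1)$. To do so, I would exhibit many independent-looking short paths from $\ZERO$ to potential fixed points. For each permutation $\sigma$ of $[n]$, consider the monotone walk $z_0=\ZERO,z_1,\dots,z_n=\ONE$ with $z_t=z_{t-1}\oplus e_{\sigma(t)}$. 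The event that this walk is realised in $\Gamma(f)$ and terminates at a fixed point is the conjunction of $n$ ``step bits'' $f_{\sigma(t)}(z_{t-1})=1$ and $n$ ``target bits'' $f_i(\ONE)=1$; since the $z_t$ are distinct and $z_t\neq\ONE$ for $t<n$, these $2n$ bits are pairwise disjoint, and the event has probability $2^{-2n}$. With $n!$ such $\sigma$, the expected number of successful permutations is $n!\,2^{-2n}\sim(n/(4e))^n\sqrt{2\pi n}$, which blows up super-exponentially. A second-moment estimate, controlling the number of shared bits between pairs of paths $P_\sigma$ and $P_{\sigma'}$, should then upgrade this expectation into the required concentration $\Pr(\ZERO\text{ trapped})=o(2^{-n})$.

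The main obstacle is precisely this second-moment step. All paths above share the endpoint $\ONE$, so the $n$ target bits are a common factor inflating the pairwise correlations; moreover, two permutations $\sigma,\sigma'$ that agree on a prefix (as sets) also share several step bits. To decouple sufficiently, one can enlarge the pool of candidate targets to all $y$ with $\wH(y)\approx n/2$ and take all $\wH(y)!$ shortest hypercube paths from $\ZERO$ to each, which multiplies the first moment while keeping pairwise overlaps comparatively rare. Getting the second-moment ratio small enough to beat the $2^{-n}$ threshold in the union bound is where the delicate combinatorics lies.
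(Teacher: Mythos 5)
First, note that the paper does not prove this statement: it is quoted from Bollob\'as, Gotsman and Shamir \cite{BGS93}, so there is no internal proof to compare against. Judged on its own, your outline has two genuine gaps. The first is in the union-bound setup: you claim it suffices to show $\Pr(\ZERO\text{ is trapped})=o(2^{-n})$, but this probability is at least $\Pr(\Fix(f)=\emptyset)\to e^{-1}$, since when $f$ has no fixed point every state is trapped. What you actually need is $\Pr(x\text{ trapped and }\Fix(f)\neq\emptyset)=o(2^{-n})$ (equivalently, a conditional bound given the existence of a fixed point); this is repairable in principle, but it changes the probabilistic computation, because you must then work relative to the location of the fixed points the network happens to have rather than a target of your choosing.

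The second gap is the one you flag yourself, and it is fatal to the specific plan. Every one of your $n!$ path-events is contained in the event that $\ONE$ is a fixed point, which has probability $2^{-n}$; hence no first- or second-moment argument over these events can push $\Pr(\ZERO\text{ reaches a fixed point})$ above $2^{-n}$. Enlarging the target pool to all $y$ of weight about $n/2$ does not escape this difficulty: the expected number of fixed points is exactly $1$ and their number is asymptotically Poisson$(1)$, so conditionally on fixability there is typically a \emph{single} sink at an essentially arbitrary location, and every state must reach that particular sink. A moment computation over shortest paths to prescribed targets cannot deliver such a statement; what is needed (and what \cite{BGS93} actually carries out) is a reachability and expansion analysis of the random subgraph of the directed $n$-cube in which each of the $n2^n$ arcs is present independently with probability $1/2$, showing that the out-reachable set of every vertex is almost surely so large that it contains a sink whenever a sink exists. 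Your first paragraph, computing $\Pr(\Fix(f)\neq\emptyset)\to 1-e^{-1}$ via the independence of the events $f(x)=x$ over distinct $x$, is correct, but it is the easy half of the theorem.
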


For any fixable network $f$, the \BF{fixing length} of $f$ is the minimum length of a word fixing $f$ and is denoted as $\lambda(f)$. For instance, we have seen that $w=1231$ fixes the network in Figure~\ref{fig:f}, thus this network has fixing length at most $4$, and it is easy to see that no word of length three fixes this network, and thus it has fixing length exactly $4$. It is easy to construct a fixable $n$-component network $f$ such that $\lambda(f)$ is exponential in $n$ (see \cite{AGRS18}).

We extend our concepts to entire families $\mathcal{F}$ of $n$-component networks. We say that $\mathcal{F}$ is fixable if there is a word $w$ such that $w$ fixes $f$ for all $f\in\mathcal{F}$, which is clearly equivalent to: all the members of $\mathcal{F}$ are fixable. The fixing length $\lambda(\mathcal{F})$ is defined naturally as the minimum length of a word fixing $\mathcal{F}$; we then have the upper bound $\lambda(\mathcal{F}) \le 4^n |\mathcal{F}|$. Indeed, let $\{0,1\}^n = \{x^1, \dots, x^{2^n}\}$ and $f \in \mathcal{F}$ and recursively define the word $W^f := w^1, \dots, w^{2^n}$ such that $f^{w^1, \dots, w^k}(x^k)$ is a fixed point for all $1 \le k \le 2^n$. Since every $w^k$ can be chosen to be of length at most $2^n$, we have $|W^f| \le 4^n$. Concatenating all the $W^f$ words then yields a word fixing $\mathcal{F}$ of length $4^n |\mathcal{F}|$. 

We are then interested in families $\mathcal{F}$ which can be fixed ``rapidly'', i.e. far below the trivial upper bound above. Firstly, we prove that the fixing length of the family of $n$-component networks with acyclic asynchronous graphs is in $\Theta(n 2^n)$. Secondly, it is known that the fixing length of family of $n$-component monotone networks is in $O(n^3)$ \cite{AGRS18}. We then exhibit two families of monotone networks which linear and quadratic fixing lengths: monotone networks with tree interaction graphs and conjunctive networks with symmetric interaction graphs. 

\section{Notation}

Let $w=w_1\dots w_p$ be a word. Then length $p$ of $w$ is denoted $|w|$. We say that a word $u=u_1\dots u_q$ is a \BF{subword} of $w$ is there exits $1\leq i_1 <i_2 < \dots < i_q\leq p$ such that $u=w_{i_1}\dots w_{i_q}$. The empty word is denoted $\epsilon$. A word $w$ is \BF{$n$-universal} if every permutation of $[n]$ (word of length $n$ without repetition) is a subword of $w$. The minimum length of an $n$-universal word is denoted $\lambda(n)$. We then have (see \cite{AGRS18} and references therein):
\[
\lambda(n) = n^2 - o(n^2).
\]

Directed graphs have no parallel arcs, and may have loops (arcs from a vertex to itself). Paths and cycles are  always without repeated vertices.  Given a directed graph $G$, the \BF{underlying undirected} graph $H$ of $G$ has the same vertex set, and two vertices $i$ and $j$ are adjacent in $H$ if and only if $i\neq j$ and $G$ has an arc from $i$ to $j$ or from $j$ to $i$. We refer the reader to the authoritative book on graphs by Bang-Jensen and Gutin \cite{BG08} for some basic concepts, notation and terminology.

Given $x,y\in\B^n$, we write $x\leq y$ to mean that $x_i\leq y_i$ for all $i\in [n]$. Equipped with this partial order, $\B^n$ is the usual Boolean lattice. An $n$-component network $f$ is \BF{monotone} if it preserves this partial order, that is,
\[
\forall x,y\in\B^n,\qquad x\leq y\Rightarrow f(x)\leq f(y).
\]
We denote by $F_M(n)$ the family of $n$-component monotone networks and by $\lambda_M(n)$ the fixing length of $F_M(n)$. More generally, if $F_X(n)$ is any family of $n$-component fixable networks, then $\lambda_X(n)$ is the fixing length of $F_X(n)$. If $G$ is a directed graph, then $F(G)$ denotes the set of $n$-component networks $f$ such that the interaction graph of $f$ isomorphic to a subgraph of $G$. Then, $F_X(G):=F_X(n)\cap F(G)$ and $\lambda_X(G)$ is the fixing length of $F_X(G)$.

Let $f$ be an $n$-component network. We set $f^\epsilon:= \mathrm{id}$ and, for any integer $i$ and $x\in\B^n$, we define $f^i(x)$ as in the introduction if $i\in [n]$, and $f^i(x):=x$ if $i\not\in [n]$. This extends the action of letters in $[n]$ to letters in $\mathbb{N}$, and by extension, this also defines the action of a word over the alphabet $\mathbb{N}$. 

Let $G$ be a directed graph with vertex set $[n]$. The \BF{conjunctive network on $G$} is the $n$-component network $f$ defined as follows: for all $i\in[n]$ and $x\in\B^n$, 
\[
f_i(x)=\bigwedge_{j\in N^-(i)} x_j,
\]
where $N^-(i)$ is the set of in-neighbors of $i$ in $G$ (with $f_i(x)=1$ if $i$ has no in-neighbor).  As shown in \cite{AGRS18}, every conjunctive network on $n$ vertices has a fixing length of at most $2n-2$, and this is tight. 

\section{Asynchronous-acyclic networks}

A network is \BF{asynchronous-acyclic} if its asynchronous graph is acyclic. The family of all such networks is denoted as $F_{A\Gamma}(n)$. Clearly, any asynchronous-acyclic network is fixable. It is well known, and easy to prove, that if $G(f)$ is acyclic then so is $\Gamma(f)$ \cite{Rob80}. 

Let $P=x^1x^2\dots x^l$ be a path of the $n$-cube, and let $i_k$ be the component that differs between $x^k$ and $x^{k+1}$, $1\leq k<l$. The word $i_1i_2\dots i_{l-1}$ is the word \BF{induced} by $P$, and a word is an \BF{$n$-path-word} if it is induced by at least one path of the $n$-cube. A word $w$ is an $n$-path-word if and only if, for all $i < j$, there exists $k \in [n]$ which occurs an odd number of times in $w_i \dots w_j$ \cite{Gil58}. Note that an $n$-path-word has no consecutive repetitions and is of length at most $2^n-1$. A word $W$ is \BF{$n$-path-universal} if it contains all $n$-path-words as subwords. Let $\Lambda(n)$ denote the minimum length of an $n$-path-universal word. For instance, for $n=2$, the maximal $n$-path-words are $121$ and $212$, hence the word $1212$ is $n$-path-universal and $\Lambda(2) = 4$. In general, $\Lambda(n)=\Theta(n2^n)$:

\begin{lemma}
For all $n\geq 1$,
\[
	 n 2^{n-1} \le \Lambda(n) \le (n-1) (2^n - 1) + 1.
\]
\end{lemma}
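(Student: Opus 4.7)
The plan is to treat the two bounds separately, using a cyclic universal word for the upper bound and a letter-by-letter counting argument for the lower bound.

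\textbf{Upper bound.} I would take $W$ to be the cyclic pattern $12\cdots n\, 12\cdots n \cdots$ truncated to length exactly $(n-1)(2^n-1)+1$. The key property is that for every position $p$, the $n-1$ letters $W_{p+1},\ldots,W_{p+n-1}$ are exactly $[n]\setminus\{W_p\}$, because any $n$ consecutive letters of $W$ form a full rotation of $12\cdots n$. Given any $n$-path-word $w=w_1\cdots w_L$, recall $L\le 2^n-1$ and $w$ has no consecutive repetitions. I would embed $w$ in $W$ greedily: first match $w_1$ at some position $p_1\le n$, and then, having matched $w_k$ at position $p_k$, use the window $W_{p_k+1},\ldots,W_{p_k+n-1}$ to find $w_{k+1}$, which must occur there since $w_{k+1}\in[n]\setminus\{w_k\}$. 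Thus $p_{k+1}\le p_k+n-1$, giving $p_L\le n+(L-1)(n-1)\le (n-1)(2^n-1)+1=|W|$, as required.

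\textbf{Lower bound.} The strategy is to show that every $n$-path-universal word must contain each letter $a\in[n]$ at least $2^{n-1}$ times; summing then gives $|W|\ge n\,2^{n-1}$. To achieve this, I would exhibit, for each $a\in[n]$, an $n$-path-word $u_a$ in which the letter $a$ appears exactly $2^{n-1}$ times, i.e.\ a path in the $n$-cube that uses all $2^{n-1}$ edges of the perfect matching labeled by coordinate $a$. The construction is to pick a Hamiltonian path $v_1v_2\cdots v_{2^{n-1}}$ in the $(n-1)$-cube obtained by deleting coordinate $a$, and lift it to the $n$-cube as
\[
(v_1,0),(v_1,1),(v_2,1),(v_2,0),(v_3,0),(v_3,1),(v_4,1),(v_4,0),\ldots
\]
where the $a$-coordinate alternates after each pair. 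A quick check shows this visits all $2^n$ vertices without repetition, and flips coordinate $a$ exactly once per pair, hence $2^{n-1}$ times in total. Since $W$ contains $u_a$ as a subword, at least $2^{n-1}$ positions of $W$ carry the letter $a$.

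\textbf{Main obstacle.} The upper bound is essentially a direct verification once one hits on the cyclic construction. The only part requiring genuine insight is the lower bound's construction of $u_a$: realizing that what is needed is an $n$-path-word saturating the letter $a$, equivalently a Hamiltonian path traversing every edge of a coordinate matching, and then producing such a path by lifting a Hamiltonian path of the $(n-1)$-cube via the zig-zag pattern above. Once $u_a$ is in hand, the rest is a one-line counting argument.
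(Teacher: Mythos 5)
Your proof is correct and follows essentially the same route as the paper: the lower bound counts, for each letter $a\in[n]$, the $2^{n-1}$ forced occurrences coming from a Gray-code-style Hamiltonian path in which coordinate $a$ alternates (the paper builds one such path recursively and invokes symmetry across letters), and the upper bound is a periodic word of length $(n-1)(2^n-1)+1$ shown to contain every repetition-free word of length at most $2^n-1$. The only cosmetic differences are that you repeat $12\cdots n$ cyclically rather than alternating increasing and decreasing sweeps, and that you spell out the greedy-embedding verification the paper leaves implicit.
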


\begin{proof}
We first show the lower bound. For that, we define inductively a Hamiltonian path $P^n$ of the $n$-cube in the following way: $P^1:=x^1x^2$ with $x^1=0$ and $x^2=1$ and, for $n>1$, $P^n$ is defined from $P^{n-1}=x^1x^2\dots x^{2^{n-1}}$ by setting 
\[
P^n:=(0,x^1)(0,x^2)\dots (0,x^{2^{n-1}})(1,x^{2^{n-1}})(1,x^{2^{n-1}-1})\dots (1,x^2)(1,x^1).
\] 
$P^n$ then corresponds to the canonical Gray code. Let $w^n$ be the word induced by $P^n$. It is easy to see that the letter $n$ appears exactly $2^{n-1}$ times in $w^n$. Thus any $n$-path-universal word contains at least $2^{n-1}$ occurrences of the letter $n$. By symmetry, every letter appears at least $2^{n-1}$ times, thus $\Lambda(n) \ge n 2^{n-1}$. 

We now show the upper bound. Let 
\[
W:=1,u^1,u^2,\dots,u^{2^n-1}\quad\text{with}\quad
\left\{
\begin{array}{ll}
u^k:=2,3,\dots, n&\text{ if $k$ is odd}\\
u^k:=n-1,n-2,\dots,1&\text{ if $k$ is even}.
\end{array}
\right.
\]
Then $W$ contains all words of length at most $2^n-1$ without consecutive repetitions, and hence is $n$-path-universal.
\end{proof}

\begin{theorem} \label{prop:h_acyclic}
A word fixes $F_{A\Gamma}(n)$ if and only if it is $n$-path-universal, thus 
\[
\lambda_{A\Gamma}(n) = \Lambda(n).
\]
Moreover, 
\[
\max_{f \in F_{A\Gamma}(n)} \lambda(f) = 2^n - 1.
\]
\end{theorem}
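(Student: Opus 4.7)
The plan is to split the theorem into its two parts and address them separately.

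For the biconditional, I treat both directions. The forward direction is obtained by a one-path network construction: given any $n$-path-word $w = i_1 \ldots i_l$ induced by an $n$-cube path $P = y^0 y^1 \ldots y^l$, I build a network $f^P$ whose only non-trivial arcs in $\Gamma(f^P)$ are those of $P$, with every state off $P$ and the endpoint $y^l$ itself being fixed. Then $f^P \in F_{A\Gamma}(n)$; starting from $y^0$, the trajectory under any fixing word can only move along $P$ and must reach $y^l$, which forces $w$ to occur as a subword of $W$.

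The converse direction is the main obstacle. Assume $W$ is $n$-path-universal and suppose, for contradiction, that for some $f \in F_{A\Gamma}(n)$ and initial state $x$ the trajectory of $W$ on $x$ ends at a non-fixed state $y^p$, with induced word $u = u_1 \ldots u_p$ realised at the flip positions $t_1 < \ldots < t_p$ of $W$. Pick any out-neighbor coordinate $i^*$ of $y^p$. Acyclicity of $\Gamma(f)$ ensures $u \cdot i^*$ is a genuine $n$-path-word, so by universality it embeds into $W$ at some positions $s_1 < \ldots < s_p < s_{p+1}$. The key lemma, which I would prove by induction on $k$, is that $t_1, \ldots, t_p$ is the \emph{earliest} occurrence of $u$ as a subword of $W$: between two consecutive flip positions no letter of $W$ can equal the next out-neighbor coordinate, for otherwise the trajectory would have flipped sooner. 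This yields $s_p \ge t_p$. Moreover, $i^*$ cannot appear in $W$ after position $t_p$ (else the trajectory would leave $y^p$), so $s_{p+1} \le t_p$; combined with $s_p < s_{p+1}$ this gives $s_p < t_p$, a contradiction.

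For the moreover part, the lower bound comes from a network $f^*$ whose asynchronous graph is a Hamiltonian path of the $n$-cube (say, along a Gray code) terminated at a fixed sink: from the source any fixing word must produce $2^n - 1$ flips in a single forced order, so $\lambda(f^*) = 2^n-1$. For the upper bound I argue constructively. Fix a topological sort $\pi$ of $\Gamma(f)$ in which arcs decrease $\pi$; enumerate the $m \le 2^n - 1$ non-fixed states in increasing $\pi$-order as $y^1, \ldots, y^m$; for each $y^i$ pick an out-neighbor coordinate $w_i$; and set $W := w_m w_{m-1} \ldots w_1$. A short induction on the step index $j$ shows that after the first $j$ letters of $W$ the current state is either fixed or equals some $y^i$ with $i \le m - j$: the decisive case is when the current state is precisely $y^{m-j+1}$, since then the $j$-th letter $w_{m-j+1}$ is guaranteed to flip by its very choice. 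At $j = m$ the inequality forces $i \le 0$, hence fixedness, so $\lambda(f) \le |W| = m \le 2^n - 1$.
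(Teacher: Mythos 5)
Your proposal is correct and follows essentially the same route as the paper: the one-path network for necessity, the "flip positions form the leftmost embedding of the induced word" argument (with acyclicity guaranteeing the extended word is still an $n$-path-word) for sufficiency, the Gray-code Hamiltonian path for the lower bound, and a greedy word built from a topological sort of $\Gamma(f)$ for the upper bound $\lambda(f)\le 2^n-1$. The only cosmetic difference is that you enumerate just the non-fixed states rather than all $2^n$ states in the topological sort, which changes nothing substantive.
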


\begin{proof}
Let $P$ be any path of the $n$-cube, and let $w$ be the word induced by $P$. Consider the $n$-component network $f$ whose asynchronous graph only has the arcs contained in $P$. Then $f$ is clearly asynchronous-acyclic and a word fixes $f$ if and only if it contains $w$ as a subword. We deduce the following three properties (the third is obtained from the second using the fact that the $n$-cube has a Hamiltonian path, e.g. the path $P^n$ constructed above):
\begin{enumerate}
\item[{\it (1)}] Any word fixing $F_{A\Gamma}(n)$ is $n$-path-universal, thus $\lambda_{A\Gamma}(n)\geq\Lambda(n)$.
\item[{\it (2)}] For any $n$-path-word $w$, there exists $f\in F_{A\Gamma}(n)$ with $\lambda(f)\geq |w|$.
\item[{\it (3)}] There exists $f\in F_{A\Gamma}(n)$ with $\lambda(f)\geq 2^n-1$.
\end{enumerate}

Conversely, let us prove that $\lambda(f) \le 2^n - r$ for any $f \in F_{A\Gamma}(n)$ with $r$ fixed points. Since $r\geq 1$, together with the property {\it (3)}, this shows the second assertion of the statement. Let  $x^1x^2\dots x^{2^n}$ be a topological sort of $\Gamma(f)$: for all $1\leq p\leq q\leq 2^n$, $\Gamma(f)$ has no arc from $x^q$ to $x^p$. Then $x^p$ is a fixed point if and only if $p>2^n-r$. Furthermore, we have the following two properties: {\it (i)} for all $i\in [n]$ we have $f^i(x^p)=x^q$ for some $q\geq p$; and {\it (ii)}
if $p\leq 2^n-r$ then there exists at least one component in $[n]$, say $i_p$, such that $f^{i_p}(x^p)=x^q$ for some $q>p$. We will prove that $w:=i_1i_2\dots i_{2^n-r}$ fixes $f$. Let $1\leq p\leq 2^n$, and let us prove, by induction on $k$, that:
\[
\forall 1\leq k\leq 2^n-r,\qquad f^{i_1i_2\dots i_k}(x^p)=x^q\text{ for some $q>k$}. 
\]
If $k=1$, we deduce that $f^{i_1}(x^p)=x^q$ for some $q>1$ from {\it (i)} if $p\geq 2$, and from {\it (ii)} if $p=1$. Suppose now that $k>1$. By induction, $f^{i_1i_2\dots i_{k-1}}(x^p)=x^l$ for some $l>k-1$. We then deduce that $f^{i_1i_2\dots i_k}(x^p)=f^{i_k}(x^l)=x^q$ for some $q>k$ from {\it (i)} if $l>k$, and from {\it (ii)} if $l=k$. This completes the induction. The particular case $k=2^n-r$ shows that $f^w(x^p)=x^q$ for some $q>2^n-r$, and thus $f^w(x^p)$ is a fixed point. 

It remains to prove that any $n$-path-universal word $W=j_1j_2\dots j_s$ fixes $f$. Let $y^1\in\B^n$, and for all $1\leq k\leq s$, let 
\[
y^{k+1}:=f^{j_1j_2\dots j_k}(y^1)\qquad\text{(or, equivalently, $y^{k+1}:=f^{j_k}(y^k)$)}. 
\]
Let us prove that $y^{s+1}=f^W(y^1)$ is a fixed point. This is clear if $y^1$ is a fixed point. Otherwise $y^1\neq y^{s+1}$. Then, in the sequence $y^1y^2\dots y^{s+1}$, let $a_1,\dots a_t$ be the positions such that $y^{a_k}\neq y^{a_k+1}$. The states visited by the sequence then correspond to the path $P:=y^{a_1}y^{a_2}\dots y^{a_t}y^{a_{t+1}}$ of $\Gamma(f)$, where $a_{t+1}:=s+1$, and $w:=j_{a_1}j_{a_2}\dots j_{a_t}$ is the word induced by this path. Suppose, for a contradiction, that $y^{a_{t+1}}$ is not a fixed point. Let $y^{a_{t+2}}$ be an out-neighbor $y^{a_{t+1}}$ in $\Gamma(f)$, and let $i$ the component that differs between these two states. Then $y^{a_1}y^{a_2}\dots y^{a_{t+1}}y^{a_{t+2}}$ is a path of $\Gamma(f)$, and $w':=j_{a_1}j_{a_2}\dots j_{a_t}i$ is the word induced by this path. Hence, $w'$ is a subword of $W$. By construction, $a_1a_2\dots a_t$ corresponds to the first occurrence of $w$ in $W$. That is, setting $a_0:=0$, we have the following: for all $1\leq k\leq t$, $a_k$ is the first position in $W$ greater than $a_{k-1}$ where the letter $j_{a_k}$ appears. Since $w'$ is a subword of $W$, we deduce that $i$ appears after the position $a_t$, that is, there exists $a_t<b\leq s$ such that $j_b=i$. By the definition of the sequence $a_1,\dots a_t$, we have $y^k=y^{s+1}$ for all $a_t<k\leq s+1$, and thus $y^b=y^{s+1}=y^{a_{t+1}}$. But then, $y^{b+1}=f^i(y^b)=f^i(y^{a_{t+1}})=y^{a_{t+2}}\neq y^{a_{t+1}}=y^b$, a contradiction. Thus every $n$-path-universal word fixes $F_{A\Gamma}(n)$. Thus $\lambda_{A\Gamma}(n)\leq\Lambda(n)$, and with the property {\it (1)} we obtain an equality. 
\end{proof}

\section{Monotone networks}

It is proved in \cite{AGRS18} that the fixing length of the family of $n$-component monotone networks $F_M(n)$ is $O(n^3)$ and $\Omega(n^2)$; closing the gap is challenging. In this section, we introduce two classes of monotone networks with linear and quadratic fixing lengths respectively. The first family is based on trees. We say a directed graph is \BF{loop-full} if every vertex has a loop.

\begin{theorem} \label{thm:tree}
Let $G$ be a loop-full tree with $n$ vertices and $L$ leaves. Then
\[
	\lambda_M(G) = 2n - L - 1.
\]
\end{theorem}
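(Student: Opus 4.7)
My plan is to prove the equality $\lambda_M(G) = 2n - L - 1$ by establishing matching upper and lower bounds. For the upper bound, I would root $G$ at an internal vertex $r$ (which exists whenever $G$ has an edge, the small cases being handled separately) and construct an explicit fixing word $W$ of length $2n - L - 1$ by a depth-first traversal: the root $r$ is updated once at the start, each internal non-root vertex is updated exactly twice (once when the DFS enters it from its parent, and once on the final exit back to the parent), and each leaf is updated exactly once. A direct count gives $|W| = 1 + 2(n - L - 1) + L = 2n - L - 1$, as required.

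To show that $W$ fixes every monotone $f \in F_M(G)$, I would argue by induction on the subtree structure. The key claim is that once the DFS has entered and later exited the subtree $T_v$ rooted at $v$, the current values on $T_v$ form a fixed point of the restricted network obtained by holding $v$'s parent at its current value. A leaf requires only one update because restricting a monotone $f_\ell$ to a fixed parent value gives a monotone self-map of $\{0,1\}$, which is constant or the identity and hence idempotent after one application. The ``down-then-up'' double update at an internal vertex $v$ is what reconciles $v$'s own value with the stabilized configuration of its subtree beneath, using monotonicity of $f_v$.

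For the lower bound, my plan is to exhibit a family of adversarial monotone networks on $G$ whose combined constraints force any fixing word to have length at least $2n - L - 1$. The building blocks are, for each tree edge $\{u,v\}$, a pair of ``chain'' networks (for example, monotone networks of the form $f_u$ constant and $f_v = x_u$, together with its mirror image) that force $u$ to appear before $v$, respectively $v$ before $u$, in any fixing word; a separate constant-valued network forces every letter to appear at least once. Combining these per-edge constraints with a careful tree counting argument, in which each internal non-leaf vertex contributes two occurrences, each leaf contributes one, and the root saves one, yields the matching lower bound $2n - L - 1$.

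The main obstacle is the upper bound: confirming that the DFS-based word really produces a fixed point for every $f \in F_M(G)$. The subtle point is that updating a vertex can temporarily ``unfix'' its neighbors, so one must show that the second (return) update at an internal $v$ is enough to restore stability of $T_v$, and that subsequent updates at ancestors of $v$ do not destabilize $T_v$ again. This uses the tree structure (no cycles) together with monotonicity: after $T_v$ is closed, any change propagating back through $v$'s parent would require an arc into $T_v$ other than $\{v, \text{parent}(v)\}$, which does not exist, and $v$'s own value was reconciled with both the current parent value and the stabilized subtree on the return pass. Verifying that the single root update suffices (there is no ``return'' at the root) is the technical heart of the induction and is handled by observing that all of $r$'s children's subtrees have already been fully stabilized when the DFS terminates.
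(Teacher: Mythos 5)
Your upper bound construction does not work: with the root updated only once at the very start, the word already fails on a path with four vertices. Take the loop-full path $1-2-3-4$ rooted at $2$, so that your DFS word is $2,1,3,4,3$ (or $2,3,4,3,1$), and let $f$ be the conjunctive network $f_1=x_1\wedge x_2$, $f_2=x_1\wedge x_2\wedge x_3$, $f_3=x_2\wedge x_3\wedge x_4$, $f_4=x_3\wedge x_4$. Starting from $x=(1,1,1,0)$, the updates of $2$ and $1$ change nothing, the first update of $3$ sets $x_3=0$, and the remaining updates change nothing, so you end at $(1,1,0,0)$, which is not a fixed point since $f_2(1,1,0,0)=0\neq 1$. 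The root's value depends on its children, so once the children's subtrees have restabilised, the root must be updated \emph{again}; your closing observation that ``all of $r$'s children's subtrees have already been fully stabilized when the DFS terminates'' is exactly where the argument breaks. (Putting the single root update at the end fails symmetrically: then a child of the root becomes unfixed.) The paper's word is genuinely different: order the $N=n-L$ non-leaves by distance from the root and take $W=N,N-1,\dots,2,1,2,\dots,N$ followed by all $L$ leaves. The unique occurrence of the root sits in the middle of an inward-then-outward sweep over the non-leaves, and the key claim, proved by induction on $W^i=i\,W^{i-1}\,i$, is that after $W^i$ the $i$ non-leaves nearest the root are \emph{simultaneously} fixed. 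Note also that the leaves are updated only at the very end, after every non-leaf is permanently fixed; in your word a leaf may be updated while its neighbour can still change.

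The lower bound plan also has gaps. For an edge $\{u,v\}$ with $u$ a leaf, the gadget ``$f_u$ constant, $f_v=x_u$'' would force $u$ to occur before $v$ in any fixing word; but the optimal word above updates every leaf \emph{after} all non-leaves, so this constraint cannot be legitimately imposed within $F_M(G)$ (if it were, it would already refute the upper bound on the star). Conversely, for internal edges the per-edge constraints are too weak to give the count you need: on the loop-full path $1-2-3-4-5$ (non-leaves $2,3,4$, so $N=3$), the constraints ``both orders for $\{2,3\}$ and for $\{3,4\}$'' are satisfied by the length-$4$ word $3,2,4,3$, whereas the theorem requires the non-leaf part to have length $2N-1=5$; in particular nothing in your scheme forces $42$ as a subword, since $\{2,4\}$ is not an edge. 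The paper instead uses the single conjunctive network on $G$ and shows that $ij$ must occur as a subword for \emph{every} ordered pair of distinct non-leaves $i,j$, adjacent or not, by propagating a $1$ along a path $u,i,\dots,j$ of the tree; the bound $|W'|\ge N+(N-1)$ then follows by considering the non-leaf letter whose first occurrence comes last, and adding $L$ for the leaves gives $2n-L-1$.
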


\begin{proof}
The result is clear for $n \le 2$, thus we assume $n \ge 3$ henceforth. Then $G$ has a non-leaf, say $r$; we then root $G$ at $r$. We order the non-leaf vertices of in non-decreasing order of distance from the root (and hence $r=1$ and $1, \dots, N := n-L$ are non-leaves) and we denote the leaves as $N+1, \dots, n$ (in no particular order). Note that according to this order, $i \le j$ if and only if the path from $j$ to $r$ goes through $i$.

\medskip

We first prove that $\lambda_M(G) \le 2n-L-1 = 2N - 1 + L$. Let $W^1 := 1$ and $W^i := i W^{i-1} i$ for all $i \in [N]$; therefore, 
\[
	W := W^N = N, N-1, \dots, 2, 1, 2, \dots N
\]
has length $2N-1$. Let $f \in F_M(G)$ and $x \in \{0,1\}^n$. We say that $x$ is fixed on $[i]$ if $f_j(x) = x_j$ for all $j \in [i]$.

\begin{claim}
For all $i \in [N]$, $f^{W^i}(x)$ is fixed on $[i]$.
\end{claim}
\begin{proof}
This is obvious for $i=1$, thus assume that $i > 1$. Let
\[
x^1:=f^i(x),\quad x^2:=f^{W^{i-1}}(x^1),\quad x^3:=f^i(x^2).
\]
By induction, $x^2$ is fixed on $[i-1]$, and we want to prove that $f^{W^i}(x)=x^3$ is fixed on $[i]$. Since we have have $f_i(x^3)=x^3_i$, if $x^3$ is not fixed on $[i]$ there exists $j\in [i-1]$ such that $f_j(x^3)\neq x^3_j=x^2_j=f_j(x^2)$. Thus $x^3\neq x^2$, and this implies $x^3_i\neq x^2_i$, and since $i$ is the only component that differs between these two states, there is an arc from $i$ to $j$ in $G$. Assume that $x^3_i>x^2_i$, the other case being similar. Then $f_i(x^2)=x^3_i=1$, and $x^3\geq x^2$. Since $f_j$ is monotone, we deduce that $f_j(x^3)>f_j(x^2)=x^2_j=0$. Thus $x^2_i=x^2_j=0$. Since, $i$ is a leaf in the subgraph of $G$ induced by $[i]$, and since $j$ is adjacent to $i$ in $G$, we deduce that, in $G$, $i$ has no in-neighbors in $[i]\setminus \{i,j\}$. Since $x^2_i=x^2_j=0$, we deduce that, for all in-neighbors $k$ of $i$ in $G$, we have $x_k\geq x^2_k$, with an equality if $k\neq i,j$ (since then $k$ does not appear in $W^i$). Since $f_i$ is monotone, we deduce that $f_i(x)\geq f_i(x^2)=1$. Thus $f_i(x)=1$ and we deduce that $x^1_i=1$. Since $x^1_i=x^2_i$ (because $i$ does not appear in  $W^{i-1}$), we obtain a contradiction. Thus $f^{W^i}(x)$ is fixed on $[i]$ for all $i\in [N]$. 
\end{proof}

In particular, $y := f^W(x)$ is fixed on $[N]$. Let $\Omega = W, N+1, \dots, n$ and $z := f^{N+1, \dots, n}(y) = f^\Omega(x)$. Then we claim that $z$ is a fixed point of $f$. First, it is easy to check that $f_l(z) = z_l$ for any leaf $l$. Second, by the claim above, $f_p(z) = f_p(y) = y_p = z_p$ for any non-leaf $p$ which is not adjacent to any leaf. All that is left to show is that non-leaves which are adjacent to some leaves are still fixed by $f^\Omega$. Let $m \in N$ be a non-leaf, $\Lambda$ be the set of leaves adjacent to $m$ and $P$ be the other neighbours of $m$. Then $z_P = y_P$, $z_m = y_m = f_m( y_P, y_\Lambda )$ (by the claim) and $z_l = f_l ( y_m, y_l )$ for all $l \in \Lambda$. Suppose that $y_m = 0$ (the case $y_m = 1$ is similar). Then $z_l \le y_l$, since otherwise we have $z_l = 1 = f_l(0,0)$, which implies that $f_l$ is either constant or non-monotonic. Thus $f_m( z_P, z_\Lambda ) \le f_m ( y_P, y_\Lambda ) = y_m = z_m = 0$ and $m$ is indeed fixed.

\medskip

We now prove that $\lambda_M(G) \ge 2N-1 + L$. Let $f$ be the conjunctive network on $G$ and let $w$ be a word fixing $f$. Firstly, every $i \in [n]$ appears in $W$. Indeed, let $j \ne i$ and $x \in \B^n$ such that $x_j = 1$ and $x_k = 0$ for all $k \ne j$. Then the only fixed point reachable by $x$ is the all-zero state and hence the value of $x_i$ must be updated. This first claim is sufficient to prove the lower bound when $N = 1$, thus we assume $N \ge 2$ in the sequel. Secondly, $W$ contains every sequence of the form $ij$ as a subword for any distinct $i,j \in [N]$. Indeed, let $u, i, \dots, j$ be a path in $G$. By considering the state $x \in \B^n$ such that $x_u = 1$ and $x_k = 0$ for all $k \ne u$, we see that $W$ must contain $ij$ as a subword. Now consider the subword $W'$ of $W$ only containing the occurrences of letters from $N$. Let $k \in [N]$ be the letter whose first occurrence in $W'$ happens last (thus the first occurrence of $k$ is in position $q \ge N$). Since $W'$ contains $ki$ as a subword for all $i \in [N]$, we see that $|W'| \ge N + (N-1)$. By the first claim, $|W| \ge |W'| + L \ge 2N-1 + L$.
\end{proof}

The \BF{circumference} of a directed graph $G$ is the length of a longest cycle in $G$, and zero if $G$ is acyclic. It is easy to verify that $G$ has circumference at most two if and only if the underlying undirected graph of each strong component of $G$ is a trees. An \BF{$l$-feedback vertex set} is a set of vertices $I$ such that $G \setminus I$ has circumference at most $l$. The \BF{$l$-feedback number} of $G$, denoted as $\feedback_l(G)$, is the minimum cardinality of an $l$-feedback vertex set of $G$. In \cite{AGRS18}, it is shown that, for every directed graph $G$ with $n$ vertices,
\[
\lambda_M(G)\leq (2\feedback_1(G)^2 + 1) n
\]
We now show that a bounded $2$-transversal number in $G$ implies that $F_M(G)$ has quadratic fixing length.

\begin{corollary}
For every directed graph $G$ with $n$ vertices, 
\[
\lambda_M(G)  \le \feedback_2(G) n^2+3n.
\]
\end{corollary}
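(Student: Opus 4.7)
The plan is to combine Theorem~\ref{thm:tree} with a round-based construction on a minimum $2$-feedback vertex set. Let $I$ be such a set, so that $|I|=\feedback_2(G)=:\tau$ and $G\setminus I$ has circumference at most $2$. By the characterization quoted just before the corollary, each strong component of $G\setminus I$ is strongly connected with an underlying tree, hence (for components with at least two vertices) a bidirectional tree; singleton components are trivial. Adding any missing self-loops only enlarges $F_M$, so each strong component is a loop-full tree in the sense of Theorem~\ref{thm:tree}. Ordering the strong components $C_1,\dots,C_k$ topologically in the condensation of $G\setminus I$ and concatenating the fixing words that Theorem~\ref{thm:tree} gives on each $C_i$ produces a word $U$ over the alphabet $V\setminus I$, of length $|U|\le \sum_i(2|C_i|-L_i-1)\le 2(n-\tau)$, with the following property: for every $f\in F_M(G)$ and every state $x$, reading $U$ from $x$ preserves $x_I$ (no letter of $I$ appears in $U$) and drives $x_{V\setminus I}$ to the unique monotone stable assignment $\phi(x_I)$ consistent with the current $x_I$. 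Correctness is a routine induction on the topological order, since each $C_i$ is stabilised before being used as an input by any later $C_j$.

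Enumerate $I$ as $v_1,\dots,v_\tau$ and consider the word
\[
W \;:=\; U\cdot\bigl(v_1 v_2\cdots v_\tau\cdot U\bigr)^{N},
\]
where $N$ is to be determined. Each block $v_1\cdots v_\tau\cdot U$ first updates the $\tau$ coordinates of $x_I$ in the fixed order $v_1,\dots,v_\tau$ and then re-establishes $x_{V\setminus I}=\phi(x_I)$ via the trailing $U$. Consequently, the sequence of values of $x_I$ recorded at the end of consecutive blocks is exactly the asynchronous trajectory, under the periodic schedule $v_1 v_2\cdots v_\tau$, of the induced monotone network $F\colon\{0,1\}^\tau\to\{0,1\}^\tau$ defined by $F_i(y):=f_{v_i}(\phi(y),y)$; as soon as $x_I$ is unchanged across two consecutive blocks, $W$ has produced a fixed point of $f$.

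The main step, and the hardest part of the proof, is to show that $N\le \tau n/2$ suffices. Merely invoking the $O(\tau^3)$ monotone bound of~\cite{AGRS18} on $F$ would yield only $O(\tau^3 n)$, which is too weak once $\tau$ is large. The plan is instead a tailored monotone-potential argument exploiting both the monotonicity of $F$ and the extra structure provided by $\phi$: because $f$ (and therefore $\phi$ and $F$) is monotone and the schedule inside a block is fixed, any flip of a coordinate $v_j$ at the end of a block monotonically propagates through $\phi$, and by monotonicity of $f$ such a flip cannot be reversed in a later block without some other coordinate $v_{j'}\neq v_j$ moving in between. Amortising these cascades across the $\tau$ coordinates of $I$ bounds the total number of coordinate flips of $x_I$, summed over all blocks, by at most $\tau n/2$, so at most $\tau n/2$ blocks can be non-trivial.

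With $N\le \tau n/2$ in hand, the length of $W$ is
\[
|W|\;\le\;|U|+N\bigl(\tau+|U|\bigr)\;\le\;2(n-\tau)+\tfrac{\tau n}{2}\bigl(\tau+2(n-\tau)\bigr)\;=\;\feedback_2(G)\,n^2+2n-\tfrac{\tau^2 n}{2}-2\tau,
\]
and the negative correction is absorbed into the additive $3n$ slack using $\tau\le n$, which delivers $|W|\le\feedback_2(G)n^2+3n$ as required. Every ingredient other than the flip-counting bound on the induced dynamics is a direct consequence of Theorem~\ref{thm:tree} and the topology of $G\setminus I$; the monotone-dynamics argument controlling $N$ is the delicate step where I expect the bulk of the work to lie.
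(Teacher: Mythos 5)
There is a genuine gap here, in fact two. First, your reduction to an induced network $F$ on $\{0,1\}^\tau$ presupposes that for each value of $x_I$ there is a \emph{unique} stable assignment $\phi(x_I)$ on $V\setminus I$; this is false. The restriction of $f$ to $V\setminus I$ with $x_I$ frozen is a monotone network on a disjoint union of bidirectional trees, and such networks typically have several fixed points (a single symmetric edge with conjunctive dynamics and no in-neighbours in $I$ already has both the all-zero and the all-one stable state). The word $U$ therefore drives $x_{V\setminus I}$ to \emph{some} fixed point that depends on the whole current state, not on $x_I$ alone, so $F_i(y):=f_{v_i}(\phi(y),y)$ is not well defined and the ``trajectory of $F$ under a periodic schedule'' picture does not survive as stated.

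Second, and more seriously, the entire quantitative content of the corollary rests on your claim that $N\le \tau n/2$ repetitions of the fixed block $v_1\cdots v_\tau\cdot U$ suffice, and you do not prove it: the monotone-potential/amortised-cascade paragraph is a plan, not an argument. Nothing in the paper or in \cite{AGRS18} supports the assertion that a \emph{fixed periodic} schedule fixes a monotone network at all, let alone within $O(\tau n)$ rounds; this is precisely the difficulty that forces the use of \emph{universal} words (containing every permutation as a subword) in \cite{AGRS18}. The paper's proof is structured quite differently: after fixing the tree part with $w$ (your $U$), it updates each feedback vertex \emph{once} and follows each such update with an $(\alpha+k)$-universal word $\omega^k$ of length $\lambda(\alpha+k)=O(n^2)$, giving $W=w,\alpha+1,\omega^1,\alpha+2,\omega^2,\dots,n,\omega^\tau$, whose correctness is obtained by adapting Theorem~13 of \cite{AGRS18}. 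The $\feedback_2(G)\,n^2$ term in the bound comes from these $\tau$ universal words, not from $O(\tau n)$ cheap rounds. Until you either prove your flip-counting bound (and repair the definition of $\phi$, e.g.\ by tracking actual states rather than an induced map) or fall back on the universal-word machinery, the proof is incomplete at its central step.
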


\begin{proof}
Let $\tau := \feedback_2(G)$, $\alpha := n - \tau$, and let us label the vertices of $G$ from $1$ to $n$ in such a way that $I = \{\alpha + 1, \dots, n\}$ is a minimum $2$-feedback vertex set of $G$. Let $T_1, \dots, T_k$ be the strong components of $G \setminus I$ in topological order. Note that $\alpha \ge 2$ and that the underlying undirected graph of every $T_i$ is a tree, say with $n_i$ vertices. Let $w^i$ be a word of length at most $2n_i-1$ fixing each $F_M(T_i)$, which exists in virtue of the previous theorem. It is then easy to see that the word $w = w^1, \dots, w^k$ fixes $F_M(G\setminus I)$, and $|w| \le 2\alpha-k\leq 2n$.

By adapting the proof of \cite[Theorem 13]{AGRS18}, we can prove the following result. If $w$ is a word fixing $F_M(G \setminus I)$ and if, for all $1 \le k \le \tau$, $\omega^k$ is an $(\alpha + k)$-universal word, then $F_M(G)$ is fixed by the word
\[
W := w, \alpha + 1, \omega^1,\alpha+2,\omega^2, \dots, n, \omega^\tau.
\]
Applying this result to our problem yields a word $W$ of length
\[
	 |W| = |w| + \tau+\sum_{k=1}^\tau \lambda(\alpha + k) \le 2n  + n+\sum_{i= \alpha +1}^n i^2 \le \tau n^2+3n.
\]
\end{proof}

The second family is described as follows. A directed graph is \BF{symmetric} if, for all distinct vertices $i$ and $j$, if there is an arc from $i$ to $j$ then there is also an arc from $j$ to $i$ (thus a symmetric directed graph can be regarded as an undirected graph with possibly a loop on some vertices). We consider the family $F_{CS}(n)$ of conjunctive networks on symmetric directed graphs.

Say a word $w$ over the alphabet $[n]$ is \BF{$(n,k)$-universal} if it contains, as subwords, all the words of length $n-k$ without repetition (there are $(n-k)!{n\choose k}$ such words). Let $\lambda_k(n)$ denote the minimum length of an $(n,k)$-universal word. In particular, $\lambda_0(n)=\lambda(n)$ and $\lambda_k(n)=0$ for $k\geq n$. 

\begin{lemma}
For all fixed $k$, $\lambda_k(n) = n^2 - o(n^2)$. 
\end{lemma}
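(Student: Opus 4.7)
The plan is to sandwich $\lambda_k(n)$ between $\lambda_0(n-k)$ and $\lambda_0(n)$ and then invoke the known estimate $\lambda(n)=n^2-o(n^2)$ quoted in Section~2. Since $k$ is fixed, both $(n-k)^2$ and $n^2$ are equal to $n^2(1-o(1))$, so the lemma will follow immediately.

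For the upper bound, I would show the trivial inclusion $\lambda_k(n)\le \lambda_0(n)$. Any $n$-universal word $W$ contains every permutation of $[n]$ as a subword; given an arbitrary word $u$ of length $n-k$ over $[n]$ without repetitions, one extends $u$ to a permutation $\pi$ of $[n]$ by appending (say at the end, or inserted anywhere) the $k$ missing letters. Then $\pi$ is a subword of $W$, and $u$ is a subword of $\pi$, so $u$ is a subword of $W$. Hence $W$ is $(n,k)$-universal, giving $\lambda_k(n)\le \lambda_0(n) = n^2-o(n^2)$.

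For the lower bound, I would use a restriction argument. Let $W$ be an $(n,k)$-universal word of length $\lambda_k(n)$, and fix any $(n-k)$-element subset $A\subseteq[n]$. Let $W_A$ denote the subword of $W$ consisting of exactly those positions whose letter lies in $A$. Any permutation $\pi$ of $A$ is a word of length $n-k$ over $[n]$ without repetitions, so $\pi$ appears as a subword of $W$; but since $\pi$ uses only letters in $A$, any occurrence of $\pi$ in $W$ actually lies inside $W_A$. Therefore $W_A$ is $(n-k)$-universal over the alphabet $A$, and so $|W|\ge |W_A|\ge \lambda_0(n-k)$.

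Combining the two bounds gives
\[
\lambda_0(n-k)\ \le\ \lambda_k(n)\ \le\ \lambda_0(n).
\]
Using $\lambda_0(m)=m^2-o(m^2)$ with $m=n-k$ and $m=n$, and the fact that $k$ is fixed so that $(n-k)^2=n^2-o(n^2)$, both sides equal $n^2-o(n^2)$, and the lemma follows. I do not expect any real obstacle here: the only subtlety is checking that the restriction to $A$ really inherits universality, which is immediate because permutations of $A$ contain no letters outside $A$.
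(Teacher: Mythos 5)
Your proof is correct, but it takes a genuinely different route from the paper's on both bounds. For the upper bound, the paper constructs an explicit $(n,k)$-universal zig-zag word of length $(n-1)(n-k)+1$, whereas you use the monotonicity $\lambda_k(n)\le\lambda_0(n)=\lambda(n)$ together with the quoted asymptotics for $\lambda(n)$; both give $n^2(1+o(1))$, though the explicit construction is reused elsewhere in the paper (e.g.\ in the corollary on $2$-feedback vertex sets) and gives a concrete constant. For the lower bound, the paper proves the ``splitting'' inequality $\lambda(n)\le\lambda_k(n)+\lambda_{n-k}(n)$ (a permutation of $[n]$ decomposes into its first $n-k$ and last $k$ letters) and then bounds $\lambda_{n-k}(n)\le(n-1)k+1$, yielding $\lambda_k(n)\ge\lambda(n)-(n-1)k-1$; this pins $\lambda_k(n)$ to within an additive $O(kn)$ of $\lambda(n)$ itself, which remains informative even when $k$ grows slowly with $n$. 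Your restriction-to-a-subalphabet argument instead yields $\lambda_k(n)\ge\lambda(n-k)$, which is cleaner and construction-free, but only compares $\lambda_k(n)$ to $\lambda$ evaluated at a smaller argument, so the error term is controlled only through the $o(n^2)$ asymptotics of $\lambda$. For fixed $k$ both sandwiches collapse to $n^2-o(n^2)$, so your argument fully establishes the lemma as stated.
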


\begin{proof}
Let $W^k:=1,w^1,w^2,\dots,w^{n-k}$ with $w^r:=2,3,\dots, n$ if $r$ is odd and $w^k:=n-1,n-2,\dots,1$ otherwise. Then it is easy to check that $W^k$ is $(n,k)$-universal. Thus
\[
\lambda_k(n) \le (n-1)(n-k) + 1.
\]
Furthermore, it $\omega^k$ and $\omega^{n-k}$ are any two words that are $(n,k)$-universal and $(n,n-k)$-universal respectively, then $\omega:=\omega^k,\omega^{n-k}$ is $n$-universal. This shows that $\lambda(n)\leq \lambda_k(n)+\lambda_{n-k}(n)$, and from the upper bound above we obtain 
\[
\lambda_k(n)\geq \lambda(n)-(n-1)k-1.
\]
Since $\lambda(n) = n^2 - o(n^2)$ as $n$ tends to infinity, this proves the lemma. 

\end{proof}

By this lemma and the theorem below, the fixing length of $F_{CS}(n)$ is $\Theta(n^2)$.

\begin{theorem}
For all $n\geq 1$,
\[
	\lambda_1(n) \le \lambda_{CS}(n) \le \lambda_2(n) + n.
\]
\end{theorem}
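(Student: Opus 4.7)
For the lower bound $\lambda_1(n)\le\lambda_{CS}(n)$, I plan to fix an arbitrary permutation $\pi=i_1 i_2\cdots i_{n-1}$ of $n-1$ distinct letters of $[n]$, let $j$ be the missing letter, and construct the symmetric graph $G$ on $[n]$ given by the path $j,i_1,i_2,\dots,i_{n-1}$ together with a loop added at $j$. Letting $f$ be the conjunctive network on $G$ and $x$ be the state with $x_j=0$ and $x_k=1$ for $k\ne j$, the loop at $j$ pins $x_j$ at $0$ forever, and since the only fixed points of $f$ on a connected symmetric component are all-$0$ and all-$1$, any fixing word must drive $x$ to the all-$0$ state on $[n]$. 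A straightforward induction on the distance from $j$ then shows that the unique vertex adjacent to the current zero set that can transition to $0$ next is the next one along the path, so the first update setting $x_{i_k}$ to $0$ must occur later than the first update setting $x_{i_{k-1}}$ to $0$. Hence any fixing word contains $\pi$ as a subword, and since $\pi$ was arbitrary, every word fixing $F_{CS}(n)$ is $(n,1)$-universal.

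For the upper bound $\lambda_{CS}(n)\le\lambda_2(n)+n$, the plan is to take an $(n,2)$-universal word $w$ of length $\lambda_2(n)$ and append a length-$n$ suffix $s$ (a natural candidate is the identity permutation $1,2,\dots,n$), and to show that $W=ws$ fixes $F_{CS}(n)$. For any $f\in F_{CS}(n)$ on a symmetric graph $G$ and any initial state $x$, I would study the intermediate state $y=f^w(x)$ and argue that the $(n,2)$-universal word already drives each connected component of $G$ almost to its target fixed point (either all-$0$ or all-$1$ on the component), with at most two ``residual'' vertices per component possibly still out of sync. The appended pass $s$ then updates each vertex exactly once, completing the propagation and yielding a genuine fixed point.

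The main obstacle is justifying the residual-vertices claim. The conjunctive dynamics on a symmetric graph must be split into cases: a component containing a loop vertex initially set to $0$ is forced to converge to all-$0$ (since a loop pins any $0$), whereas a loop-free component may instead be ``washed'' to all-$1$ depending on the update order. One then needs to show that the $(n,2)$-universality of $w$ supplies, for every choice of two ``exceptional'' vertices, the propagation orders on the remaining $n-2$ vertices needed to settle them, and that the single subsequent update of each vertex in $s$ corrects the two residual vertices without destabilising those already at their target value. Monotonicity of the conjunctive rule (once established, zeros propagate consistently) together with the symmetry of $G$ (each edge goes both ways, so both zero- and one-propagation are available) will likely be the key ingredients that make the final pass of length $n$ sufficient.
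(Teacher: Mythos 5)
Your lower bound argument is correct and essentially identical to the paper's: the paper also takes the conjunctive network on a symmetric path (with loops at the endpoints rather than at a single end, which is immaterial) and forces the zero to propagate along the path in order, so that every repetition-free word of length $n-1$ must appear as a subword of any fixing word.

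The upper bound, however, has a genuine gap, and the problem is precisely the order in which you concatenate the two pieces. The paper uses $W = w,\omega$ with the single pass $w = 1,2,\dots,n$ \emph{first} and the $(n,2)$-universal word $\omega$ \emph{second}. The pass establishes a dichotomy: after $w$, each connected component of the underlying graph is either all-one (hence fixed) or contains an edge both of whose endpoints are $0$; for a conjunctive network on a symmetric graph such an adjacent zero pair is permanently locked, and $(n,2)$-universality then supplies, for every vertex $k$, the vertex sequence of a path from that locked edge to $k$ (a repetition-free word of length at most $n-2$ over the remaining letters), so the zeros propagate to the whole component. Your reversed order breaks both halves of this logic. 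First, your ``residual-vertices'' claim --- that an $(n,2)$-universal word applied to an arbitrary initial state leaves at most two out-of-sync vertices per component --- is not proved and is not a consequence of $(n,2)$-universality; without a preliminary pass there is no control over which fixed point a component is heading towards (for a loop-free component with an independent zero set, both all-zero and all-one are reachable, depending on the update order inside $\omega$). Second, and more decisively, a single final pass in a fixed order cannot in general repair even \emph{one} residual vertex, because it only propagates zeros along paths consistent with that order. Concretely, take the loop-free path $1-2-\cdots-n$ with conjunctive dynamics and suppose the state just before your final pass $s=1,2,\dots,n$ is all-ones except for a single zero at vertex $n$. The pass leaves $1,\dots,n-2$ untouched until it reaches $n-1$, which it sets to $0$; the result has zeros exactly at $n-1$ and $n$ and ones elsewhere, which is not a fixed point (vertex $n-2$ now wants to switch). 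So even if you could establish the residual-vertices claim, the correction step would fail. To repair the proof you should swap the two factors and prove the dichotomy claim for the state reached after the initial pass, as the paper does.
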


\begin{proof}
We first prove the lower bound. This is obvious for $n\geq 1$, so suppose that $n>1$. Let $i_1i_2\dots i_n$ be a permutation of $[n]$, and let $f$ be the $n$-component conjunctive network defined by $f_{i_1}(x)=x_{i_1}\land x_{i_2}$, $f_{i_n}(x)=x_{i_n}\land x_{i_{n-1}}$ and $f_{i_k}(x)=x_{i_{k-1}}\land x_{i_{k+1}}$ for all $1<k<n$. Then the interaction graph of $f$ is symmetric, and the arguments in the proof of Theorem~\ref{thm:tree} shows that any word fixing $f$ contains $i_2\dots i_n$ as subword (as well as $i_{n-1}i_{n-2}\dots i_1$). Hence, any word fixing $F_{CS}(n)$ contains all the words of length $n-1$ without repetition, and is thus of length at least $\lambda_1(n)$.

We now prove the upper bound. Consider the word $W := w, \omega$, where $w := 12 \dots n$ and $\omega$ is a shortest $(n,2)$-universal word. Then $|W|=\lambda(n)+n$ and we will prove that $W$ fixes $F_{CS}(n)$. Let $G$ by any symmetric directed graph with vertex set $[n]$, and let $f$ the conjunctive network on $G$. Let $H$ be the underlying undirected graph of $G$. Let $H_1\dots H_p$ be the connected components of $H$, with vertices $V_1,\dots ,V_p$. Let any $x\in\B^n$, $y := f^w(x)$, $z:=f^\omega(y)=f^W(x)$ and $1\leq q\leq p$. 

Suppose first $|V_q|=1$, say $V_q=\{i\}$. Then there are two possibilities: either $i$ has no loop in $G$, and then $f_i=1$ is constant, thus $y_i=z_i=f_i(z)=1$; or $i$ has a loop and then $x_i=y_i=z_i=f_i(z)$. Suppose now that $|V_q|\geq 2$. 

\begin{claim}
Either $y_i=1$ for all $i\in V_k$, or $y_i=y_j=0$ for some edge of $H_q$.
\end{claim}

\begin{proof}
For the sake of contradiction, suppose that the subgraph of $H_q$ induced by $\{i\in V_q : y_i = 0\}$ contains an isolated vertex, say $i$. Let $j$ be adjacent to $i$ in $H_k$. If $i<j$ then $f^{1 \dots j-1}(x)_i=y_i=0$ and thus $y_j = f^{1 \dots j}(x)_j=f_j(f^{1 \dots j-1}(x))=0$, a contradiction. Thus $k<i$ for every vertex $k$ adjacent to $i$ in $H_q$. Then $i$ has a loop, since otherwise $y_i = f_i(f^{1\dots i-1}(x))= \bigwedge_{k \in N^-(i)} y_k = 1$. We deduce that 
\[
y_i = x_i \land \bigwedge_{k \in N^-(i)\setminus\{i\}} y_k = 0,
\]
thus $x_i=0$, but then we would have $y_k = 0$ for any neighbour $k$ of $i$, a contradiction. This proves the claim. 
\end{proof}

If $y_i=1$ for all $i\in V_q$, then we clearly have $y_i=z_i=f_i(z)=1$ for all $i\in V_q$. Otherwise, $y_i=y_j=0$ for some edge $ij$ of $H_k$. It is easy to check that $f^u(y)_i=f^u(y)_j=0$ for any word $u$. Thus the states of $i$ and $j$ are always blocked in zeroes when performing the updates in $\omega$, which allows these two zeroes to be propagated to the whole component $H_q$. Since any vertex $k$ in $H_q$ is reachable from the edge $ij$ by a path of length at most $n-2$, it is easy to verify that $z_k=0$ for all $k\in V_q$, and we deduce that $f_k(z)=0=z_k$ for all $k\in V_q$. Hence, in any case, $f_k(z)=z_k$ for all $k\in V_q$, and thus $z$ is a fixed point of $f$ as desired. 
\end{proof}



\end{document}